\newtheorem{theorem}{Theorem}
\newtheorem{lemma}[theorem]{Lemma}
\newtheorem{proposition}[theorem]{Proposition}
\newtheorem{definition}[theorem]{Definition}
\newtheorem{remark}[theorem]{Remark}
\begin{document}
\tolerance10000

\title{Offsets of a regular trifolium}

\author{Thierry Dana-Picard}
\address{Jerusalem College of Technology \\ Havaad Haleumi Street 21, 9116011 Jerusalem, Israel}
\email{ndp@jct.ac.il}

\author{Zoltán Kovács}
\address{Private Pädagogische Hochschule der Diözese Linz\\ Salesianumweg 3, 4020 Linz, Austria}
\email{kovzol@gmail.com}

\begin{abstract}
The non-uniqueness of a rational parametrization of a rational plane curve may influence the
process of computing envelopes of 1-parameter families of plane curves. We study envelopes of family
of circles centred on a regular trifolium  and its offsets, paying attention to different parametrizations.
We use implicitization both to show that two rational parametrizations of a curve are equivalent, 
and to determine an implicit equation for the envelope under study. 
The derivation of an implicit equation of an offset follows another path, 
leading to new developments of the package GeoGebra Discovery. As an immediate
symbolic result, we obtain that in the general case the offset curve of a regular
trifolium is an algebraic curve of degree 14. We illustrate this fact by providing
a GeoGebra applet that computes such curves automatically and visualizes them in a web browser.

\end{abstract}

\keywords{Rational parametrization, implicit curves, regular trifolium, offset, GeoGebra,
automated deduction}

\subjclass{Primary 53A04; Secondary 53-08}

\maketitle

\section{Envelopes and offsets}

Let $\mathcal{C}_t$ be a family of plane curves, parameterized by the real $t$, given by the equation $F(x,y,t)=0$.

\begin{definition}[\emph{Impredicative}; \cite{kock}]
A plane curve $\mathcal{E}$ is called an \emph{envelope} of the family $\{ \mathcal{C}_k \}$  if the following properties hold:
\begin{enumerate}[(i)]
\item every curve  in the family $\{ \mathcal{C}_t \}$ is tangent to $\mathcal{E}$;
\item	to every point $M$ on $\mathcal{E}$ is associated a value $t(M)$ of the parameter $t$, such that $\mathcal{C}_t$ is tangent to $\mathcal{E}$ at the point $M$;
\item The function $t(M)$ is non-constant on every arc of $\mathcal{E}$.
\end{enumerate}
\end{definition}

Two other definitions exist, also mentioned by  \cite{kock}. One of them reads as follows:

\begin{definition}[\emph{Synthetic}; \cite{kock}]
The envelope of the given family of curves is the set of limit points of intersections of nearby curves $C_{t}$.
\end{definition}
 This definition is somehow problematic, as it would request the definition of a topological set of curves, in order to have a reasonable definition of a limit. Nevertheless, such a point of view has been used for a simple case by \cite{DPZ-envelopes}.

The most used and usable definition is called \emph{analytic} by \cite{kock}; note that \cite{berger}, (sections 9.6.7 and 14.6.1) gives this one only. It has been derived form the limit definition as  theorem by \cite{DPZ-envelopes}.
\begin{definition}[\emph{Analytic}; \cite{kock}]
\label{def analytic env}
Let $C_{t}$ be a 1-parameter family of curves given by an equation $F(x,y,t)=0$. The envelope of the family $\{ C_{t} \}$  is the set of all points $(x, y)$ such that there is a $t$ verifying the system of equations
\begin{equation}
\begin{cases}
F(x, y, t) = 0,\\
\frac{\partial F(x, y, t)}{\partial t} = 0.
\end{cases}
\label{def envelope system of equations}
\end{equation}
\end{definition}

Following a Wikipedia page\footnote{\url{http://en.wikipedia.org/wiki/Envelope_(mathematics)}}, \cite{br} reports on a $4^{\textrm{th}}$ definition of an envelope, namely: the envelope is the curve that bounds the planar region described by the points belonging to the curves in the family. In the case of our study, this is the \emph{offset} of the trifolium at distance 1, as coined by \cite{sendra}, page 10. Actually, the first three definitions mentioned above are equivalent, but the last one is not equivalent to them. There exist examples where envelope and offset are identical, 
such as in  \cite{DPZ-envelopes} and \cite{sendra}, but other examples enlighten the difference; see \cite{dp-circles on astroid}.
The family studied in this paper is an example with new properties. They are studied in section \ref{section offset}.

Note that the difference between envelope and offset is of the utmost importance for industrial applications, such as the determination of safety zones in industrial plants and entertainment parks.
This issue is evoked in \cite{dp-circles on astroid}.

\section{A regular trifolium: different presentations}
\label{section trifolium}
We study various constructions on the basis of a regular trifolium  (called also regular \emph{trefoil}). We recall the basic definitions, referring to \cite{mathcurve trifolium}, where other constructions are also proposed.

\begin{definition}
A \emph{regular trifolium} is a plane algebraic curve, given by the implicit equation
\begin{equation}
\label{implicit equation trifolium}
\left(x^2+y^2\right)^2=ax\left(x^2-3y^2\right)
\end{equation}
where $a$ is a positive real parameter
\end{definition}
The parameter $a$ influences the \emph{size} of the trifolium, not its shape. Two examples are shown in Figure \ref{two examples of trifolium}, obtained using GeoGebra, a Dynamic Geometry System (freely downloadable from \url{http://geogebra.org}). An important reason for using this software is that it enables dynamical mouse-driven experiments. Animation can be obtained with a CAS also, requiring some programming.
The differences between the two kinds of animation
are discussed in \cite{dp-circles on astroid}.
\begin{figure}[htbp]
\centering
\mbox{
       \subfigure[$a=1$]{\epsfig{file=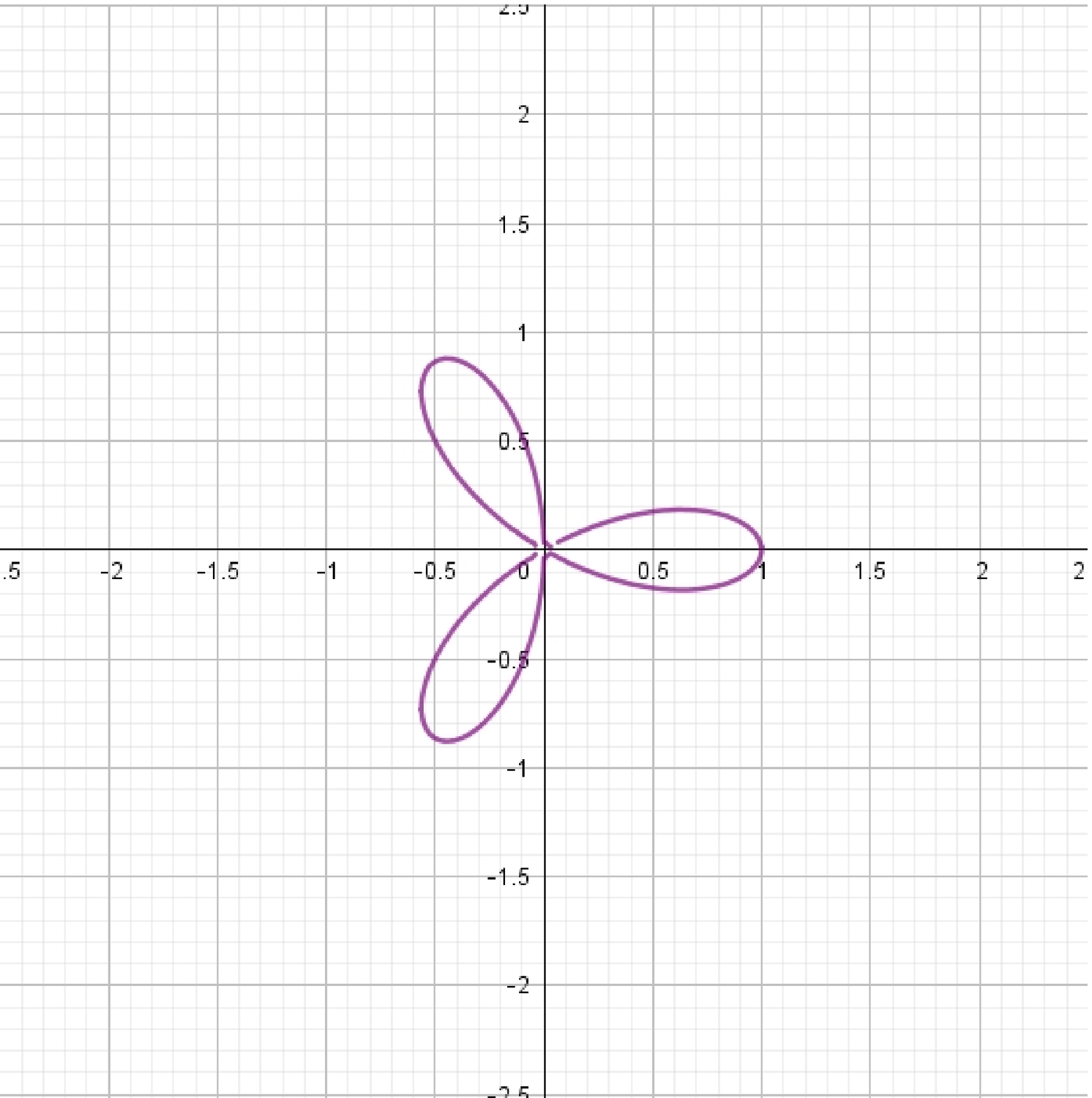,width=4.5cm}}
       \qquad
       \subfigure[$a=2$]{\epsfig{file=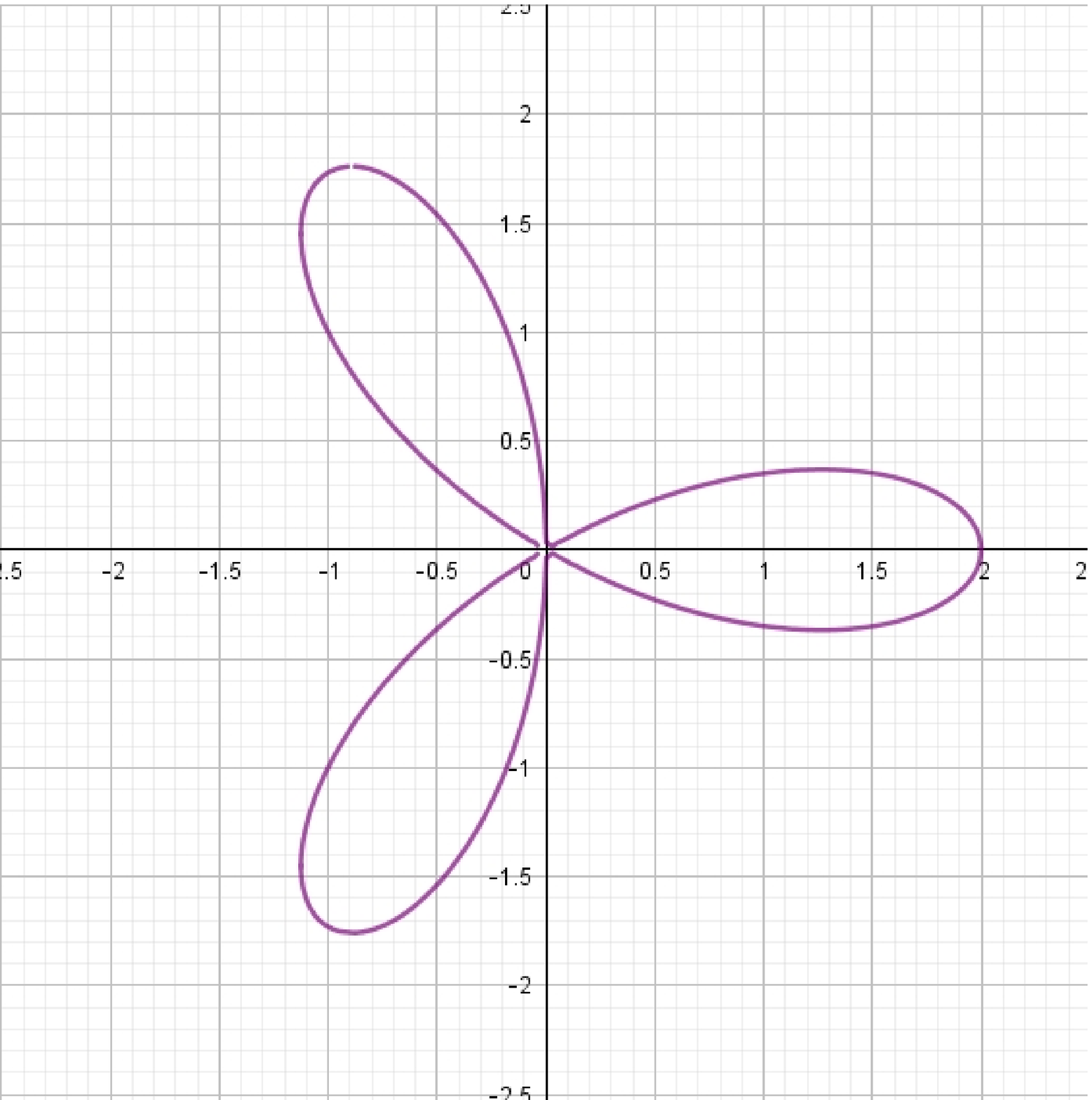,width=4.5cm}}
       }
\caption{Two examples of a regular trifolium}
\label{two examples of trifolium}
\end{figure}
A plot can be obtained also using an \textbf{implicitplot} command of a Computer Algebra System. WLOG, we will work in the case where $a=1$.

A more dynamical definition is as follows:
\begin{definition}
A regular trifolium is the trajectory of the second intersection point between a line and a circle turning around one of their common points in the same direction, and the circle turning four times as fast as the line.
\end{definition} 	
We will use this definition in order to derive a rational parametrization of the regular trifolium, whence showing that this curve is a rational curve.

Consider the circle $C$ through the origin and whose center has coordinates $(1/2,0)$. Its implicit equation is $(x-1/2)^2+y^2=1/4$. A general rotation matrix about the origin is 
The rotated circle $C_s$ is thus given by the equation
\begin{equation}
\label{eq rotated circle}
\left(x \cos s + y\sin s - \frac 12 \right)^2 + (y \cos s - x \sin s)^2 =\frac 14.
\end{equation}

Now consider the line $L:y=x$. Rotating the line at a speed one fourth of the rotation speed of the circle, we have a line $L_s$ whose equation is
\begin{equation}
\label{eq rotated line}
\cos \frac {s}{4} x+ y \sin\frac {s}{4} =0.
\end{equation}
The circle $C_s$ and the line $L_s$ intersect at two points, the origin and a second point whose coordinates are given as follows (for the sake of simplicity, we substitute $t=s/4$):
\begin{equation}
\label{eq intesect rotated circle and line}
\begin{cases}
x(t)=4\sin ^4 t -3\sin^2t,\\
y(t)= -\sin t \cos t \left(4\sin^2t-3\right).
\end{cases}
\end{equation}
Using this parametrization, we obtain Figure \ref{eq trifolium as rotated intersect}; an experimental checking using a DGS or a CAS shows that this plot is identical to Figure \ref{two examples of trifolium}a.
\begin{figure}[htbp]
\centering
\epsfig{file=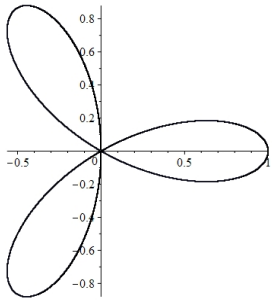,height=4cm}
\caption{Geometric construction of a regular trifolium}
\label{eq trifolium as rotated intersect}
\end{figure}
This identity may be checked by substitution of the two expressions for $x(t)$ and $y(t)$ into Equation (\ref{implicit equation trifolium}). Note that the parametrization (\ref{eq intesect rotated circle and line}) is not rational. In order to have benefit of Theorem 4.7 in \cite{sendra}, we need a rational presentation.

\cite{mathcurve rational quartic} proposes another trigonometric parametric presentation (we modify it in order to be coherent with our previous presentations):
\begin{equation}
\begin{cases}
x=a\cos t +\cos 2t,\\
y=a\sin t -\sin 2t.
\end{cases}
\end{equation}
This parametrization shows that a regular trifolium is a special case of a \emph{hypotrochoid}.
It can be verified, either experimentally with software or using trigonometric manipulations, that for $a=1/2$, we obtain the same curve studied earlier. A regular trifolium is a special case of trifolium, in which the three leaves are isometric, two of them being obtained from the third one by a rotation of angle $\frac{2\pi}{3}$ or  $\frac{4\pi}{3}$ (see \cite{dp-trifolium}).

The  following result is well-known, we emphasize it because of its importance for the subsequent sections. We will show two proofs.
\begin{proposition}
A regular trifolium is a rational curve.
\end{proposition}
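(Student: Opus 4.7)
The plan is to produce, in two independent ways, an explicit rational parametrization of the trifolium, since an irreducible plane algebraic curve is rational precisely when it admits such a parametrization.

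\emph{First approach (pencil of lines through the triple point).} The origin is a triple point of the trifolium: with $a=1$, the lowest-degree homogeneous component of equation (\ref{implicit equation trifolium}) is $x(x^2-3y^2)$, of degree three, while the full left-hand side is of degree four. Intersecting the curve with the pencil of lines $y=tx$ through the origin therefore gives the triple point plus exactly one further intersection, whose coordinates are forced to be rational in $t$. Concretely, substituting $y=tx$ into (\ref{implicit equation trifolium}) yields
\begin{equation*}
x^4(1+t^2)^2 = x^3(1-3t^2);
\end{equation*}
cancelling the factor $x^3$ that accounts for the triple point leaves a linear equation whose solution is
\begin{equation*}
x(t)=\frac{1-3t^2}{(1+t^2)^2}, \qquad y(t)=\frac{t(1-3t^2)}{(1+t^2)^2}.
\end{equation*}
Correctness is verified by direct substitution back into (\ref{implicit equation trifolium}).

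\emph{Second approach (Weierstrass substitution).} Starting from the trigonometric parametrization (\ref{eq intesect rotated circle and line}), already derived from the rotating line-and-circle construction, I would apply the tangent half-angle substitution $u=\tan(t/2)$, so that
\begin{equation*}
\sin t = \frac{2u}{1+u^2}, \qquad \cos t = \frac{1-u^2}{1+u^2}.
\end{equation*}
Substituting into $x(t)$ and $y(t)$ and simplifying turns the trigonometric parametrization into a parametrization by rational functions of $u$; up to a change of parameter the result should coincide with the one produced by the first approach, and in any case already witnesses rationality.

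The step I expect to require the most care is not the algebra but the bookkeeping of which points are actually covered. For the pencil-of-lines approach this is essentially immediate: every point of the curve other than the triple point lies on a unique line $y=tx$, with the vertical slope $t=\infty$ recovered by a standard limit (or by projectivizing). For the Weierstrass approach, the missing parameter value $u=\infty$, corresponding to $t=\pi$, omits only a single point, which does not affect rationality. Once these observations are recorded, the proposition follows.
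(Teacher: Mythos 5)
Your proposal is correct and matches the paper's own treatment almost exactly: your pencil-of-lines argument through the triple point reproduces the paper's second rational parametrization (Proposition \ref{prop 2nd rational param trifolium}), and your Weierstrass substitution applied to the trigonometric parametrization (\ref{eq intesect rotated circle and line}) is precisely the paper's first parametrization, since the rational parametrization of the unit circle in Lemma \ref{lemma rational param unit circle} is the tangent half-angle substitution in disguise. You simply present the paper's two proofs in the opposite order, with the same verifications.
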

This proposition means that a regular trifolium has  a rational parametrization. It is well-known that such a parametrization, when it exists, is not unique. We derive now two parametrizations of a regular trifolium. The first one will be obtained when using the following Lemma.

\begin{lemma}
\label{lemma rational param unit circle}
A rational parametrization of the unit circle centred at the origin is as follows:
\begin{equation}
\label{rational param unit circle}
\begin{cases}
x=\frac{1-t^2}{1+t^2},\\
y=\frac{2t}{1+t^2}.
  \end{cases}
\end{equation}
\end{lemma}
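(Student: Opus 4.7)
The plan is to give a short two-pronged argument: a direct algebraic verification that the stated pair lies on the unit circle, followed by a geometric derivation that both explains the origin of the formulas and shows the map is (essentially) a bijection. Both parts are classical; the strategy is to present them in a way that foreshadows the rational-parametrization techniques used later in the paper for the trifolium.

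First I would verify the identity $x(t)^2+y(t)^2=1$ by substitution, using only the observation that
\[
(1-t^2)^2+(2t)^2 \;=\; 1+2t^2+t^4 \;=\; (1+t^2)^2.
\]
This shows that the image of $t\mapsto (x(t),y(t))$ lies on the unit circle and that, since the two coordinate functions are quotients of polynomials in $t$ with the common nonvanishing denominator $1+t^2$, the parametrization is genuinely rational. Next, to show the map actually parametrizes the circle (minus a single point), I would give the stereographic-projection derivation. Fix the pole $P=(-1,0)$ and intersect the pencil of lines $y=t(x+1)$ with $x^2+y^2=1$; the resulting quadratic in $x$ has $x=-1$ as one root, and dividing out the factor $(x+1)$ leaves the linear relation $(1+t^2)x=1-t^2$, which yields exactly the formula for $x(t)$, and then $y(t)=t(x(t)+1)=2t/(1+t^2)$. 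This geometric picture simultaneously explains where the parametrization comes from and establishes the correspondence $t\leftrightarrow(x(t),y(t))$ between $\mathbb{R}$ and $S^1\setminus\{P\}$.

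The only point that needs a word of care — and the one I expect a referee to look at — is that the pole $P=(-1,0)$ itself is not attained by any finite value of $t$; it corresponds to the limit $t\to\infty$, i.e.\ to the point at infinity on the projective parameter line $\mathbb{P}^1$. This is the unavoidable ``missing point'' phenomenon common to every rational parametrization of an irreducible conic, and it is worth flagging here because the same issue will reappear, in a more delicate form, when we later pull rational parametrizations of the trifolium back through Lemma~\ref{lemma rational param unit circle} and attempt to compare different parametrizations via implicitization. Beyond that there is no real obstacle: the computation in the first paragraph is a single polynomial identity, and the geometric derivation is a single quadratic division.
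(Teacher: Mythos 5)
Your proposal is correct and follows essentially the same route as the paper: the paper's proof is exactly the stereographic-projection argument through the pole $B(-1,0)$, intersecting the pencil $y=t(x+1)$ with $x^2+y^2=1$ and solving by substitution, with the same remark that the point $B$ itself is not attained. Your additional direct verification of $x(t)^2+y(t)^2=1$ is a harmless (and sound) supplement, but the substance matches the paper's proof.
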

\begin{proof}
The proof is straightforward; we give it as a promo, because we will use the same method later in a more complicated setting.
Consider the unit circle $\mathcal{U}$ centred at the origin and a line $\mathcal{L}$  through the point $B(-1,0)$ (Figure \ref{unit circle param}).
\begin{figure}[htbp]
\centering
\epsfig{file=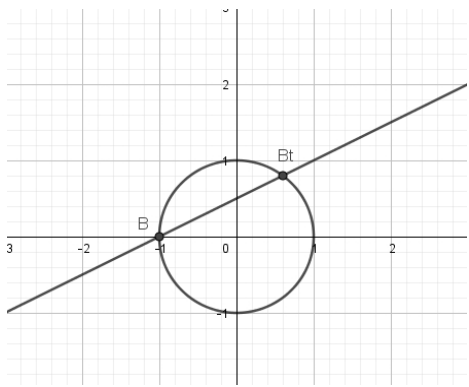,height=5cm}
\caption{Geometric construction for parameterizing the unit circle $\mathcal{U}$}
\label{unit circle param}
\end{figure}
 The tangent to $\mathcal{U}$ through $B$ is parallel to the $y$-axis, all other lines through $B$ have a slope $t$ and intersect $\mathcal{U}$ at a $2^{\textrm{nd}}$ point $B_t$. The coordinates of $B_t$ are the solutions of the system of equations
\begin{equation*}
\begin{cases}
x^2+y^2=1,\\
y=t(x+1).
\end{cases}
\end{equation*}
The result is obtained by substitution. Note that this parametrization is valid for every point but $B$.
\end{proof}

Following \cite{sendra} (p.~90), we recall that a rational parametrization can be stated by means of rational maps. Let $\mathcal{C}$ be a rational affine curve over the field
$\mathbb{K}$ and $P(t) \in \mathbb{R}(t)^2$ be a rational parametrization of $\mathcal{C}$. This parametrization induces a rational map
  \begin{center}
    \begin{tabular}{cccc}
    $P:$ & $\mathbb{A}^1(\mathbb{K})$ &$\longrightarrow$ & $\mathcal{C}$\\
      $\qquad$ & $t$ & $\mapsto$ & $P(t)$
    \end{tabular}
  \end{center}
and $P(\mathbb{A}^1(\mathbb{K}))$ is a dense subset of $\mathcal{C}$ (dense in the sense of Zariski topology).
A trigonometric parametrization of the  unit circle centred at the origin is given by $P(u)=(\cos u, \sin u), \; u \in \mathbb{R}$. On the one hand, this parametrization is a bijection between the unit circle and every interval $[2k\pi,2(k+1)\pi)$.  On the other hand, Equations (\ref{rational param unit circle}) define a bijection from
$\mathbb{R}$ onto the unit circle but the point $B$. Therefore, for almost every real $u$, there exists a real $t$ such that $\cos u = \frac{1-t^2}{1+t^2}$ and $\sin u = \frac{2t}{1+t^2}$.
By substitution into the trigonometric parametrization of the curve given above, and after simplification, we obtain the following result:
\begin{proposition}[First rational parametrization]
A rational parametrization of the regular trifolium is given by
\begin{equation}
\label{1st rational param trifolium}
\begin{cases}
x(t)=-\frac{4t^2(3t^2 - 1)(t^2 - 3)}{(t^2 + 1)^4},\\
y(t)=-\frac{2t(t - 1)(t + 1)(3t^2 - 1)(t^2 - 3)}{(t^2 + 1)^4}.
\end{cases}
\end{equation}
\end{proposition}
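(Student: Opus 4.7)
The plan is to carry out precisely the substitution that the paragraph preceding the statement already sets up. Starting from the trigonometric parametrization (\ref{eq intesect rotated circle and line}), namely $x(t)=4\sin^4 t-3\sin^2 t$ and $y(t)=-\sin t\cos t\,(4\sin^2 t-3)$ — which was verified to parametrize the trifolium by substitution into (\ref{implicit equation trifolium}) — I would replace $\sin t$ and $\cos t$ by the Weierstrass expressions of Lemma \ref{lemma rational param unit circle}, getting a rational map $\mathbb{A}^1(\mathbb R)\to\mathcal{C}$ and then simplify.

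Concretely, with $u$ the Weierstrass parameter so that $\cos t=(1-u^2)/(1+u^2)$ and $\sin t=2u/(1+u^2)$, I would first record the auxiliary identities
\[
\sin^2 t=\frac{4u^2}{(1+u^2)^2},\qquad \sin^4 t=\frac{16u^4}{(1+u^2)^4},\qquad \sin t\cos t=\frac{2u(1-u^2)}{(1+u^2)^2}.
\]
Substituting into $x=4\sin^4 t-3\sin^2 t$ and bringing the result over the common denominator $(1+u^2)^4$, the numerator reduces to $-4u^2(3u^4-10u^2+3)$, and the factorization $3u^4-10u^2+3=(3u^2-1)(u^2-3)$ produces the first coordinate of the proposition. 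For the second coordinate, the factor $4\sin^2 t-3$ similarly rewrites as $-(3u^2-1)(u^2-3)/(1+u^2)^2$; multiplying by $-\sin t\cos t$ and collecting signs — using $1-u^2=-(u-1)(u+1)$ — gives the claimed $y(u)$. Renaming $u$ to $t$ yields the formulas in the statement.

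It remains to argue that what we obtain is genuinely a \emph{rational parametrization} in the sense recalled from \cite{sendra}. Since the Weierstrass substitution covers the unit circle except the single point $B=(-1,0)$, and the trigonometric parametrization (\ref{eq intesect rotated circle and line}) was already shown to map into the trifolium, the rational formulas agree with it on a cofinite subset of the line, hence land in $\mathcal{C}$ everywhere by Zariski-closedness. The image is therefore dense in $\mathcal{C}$ and the fibres are generically finite, which is exactly the condition required. The main obstacle is not conceptual but bookkeeping: keeping track of signs and noticing the factorization $3u^4-10u^2+3=(3u^2-1)(u^2-3)$, without which the final expression looks much less compact than it should.
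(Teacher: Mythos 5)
Your proposal is correct and takes exactly the paper's route: the paper also obtains (\ref{1st rational param trifolium}) by substituting the Weierstrass expressions of Lemma \ref{lemma rational param unit circle} into the trigonometric parametrization (\ref{eq intesect rotated circle and line}) and simplifying, though it leaves the algebra implicit. Your explicit computation (including the factorization $3u^4-10u^2+3=(3u^2-1)(u^2-3)$) checks out and simply fills in the details the paper omits.
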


Another rational parametric presentation for the regular trifolium can be obtained, not applying directly Lemma \ref{lemma rational param unit circle}, but using the method of its proof.
\begin{proposition}[Second rational parametrization]
\label{prop 2nd rational param trifolium}
A rational parametrization of the regular trifolium is given by
\begin{equation}
\label{eq 2nd rational param trifolium}
\begin{cases}
x(t) = -\frac{3t^2 - 1}{(t^2+ 1)^2},\\
y(t) = -\frac{t(3t^2 - 1)}{(t^2+ 1)^2}.
\end{cases}
\end{equation}
\end{proposition}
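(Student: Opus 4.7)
The plan is to mimic the line-pencil method used in Lemma \ref{lemma rational param unit circle}, but centering the pencil at the origin rather than at a smooth boundary point. The key observation is that the origin lies on the trifolium, and in fact is a singular point of high multiplicity: when we expand the implicit equation
$$(x^2+y^2)^2 - x(x^2-3y^2)=0,$$
the lowest-order part is the cubic $-x(x^2-3y^2)$, so the origin is a triple point of the curve. Bezout-style bookkeeping then suggests that a generic line through the origin meets the trifolium at the origin with multiplicity three and at exactly one further point, and that this further point is a rational function of the slope of the line. This will give the desired parametrization.

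Concretely, I would parametrize the non-vertical lines through the origin by their slope: $y = tx$. Substituting this into (\ref{implicit equation trifolium}) with $a=1$ gives
$$x^4(1+t^2)^2 \;=\; x^3(1-3t^2),$$
and cancelling the factor $x^3$ (which encodes the triple root at the origin) leaves the single linear equation $x(1+t^2)^2 = 1-3t^2$. Solving yields
$$x(t) = -\frac{3t^2-1}{(t^2+1)^2}, \qquad y(t) = tx(t) = -\frac{t(3t^2-1)}{(t^2+1)^2},$$
which is exactly the claimed formula. A direct substitution back into (\ref{implicit equation trifolium}) confirms that these rational functions satisfy the implicit equation identically.

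I do not expect a genuine obstacle: the computation is elementary once the origin is identified as a triple point. The only point that deserves a brief comment is coverage, i.e. that the map $t \mapsto (x(t),y(t))$ hits a dense (Zariski open) subset of the trifolium, as required by the framework of rational parametrizations recalled after Lemma \ref{lemma rational param unit circle}. This follows because every point of the trifolium other than the origin lies on a unique non-vertical line through the origin (the vertical line $x=0$ contributes only the origin itself, by the implicit equation), so the parametrization is injective on its domain and misses at most the origin, which is the limit of $(x(t),y(t))$ as $t\to\pm\infty$ and as $t\to\pm 1/\sqrt{3}$. Hence the image is dense in the trifolium, completing the proof of Proposition \ref{prop 2nd rational param trifolium}.
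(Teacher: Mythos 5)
Your proposal is correct and follows essentially the same route as the paper: both exploit the triple point at the origin, substitute the line $y=tx$ into the implicit equation, cancel the factor $x^3$, and solve the remaining linear equation for $x$. The only addition is your explicit remark on Zariski-density of the image, which the paper leaves implicit.
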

\begin{proof}
The regular trifolium is a quartic with a triple point at the origin. The $y-$axis is tangent to the curve at the origin; by substitution of $x=0$ into Equation \ref{implicit equation trifolium}, we prove that there is no other point of intersection.   Any other line through the origin (with equation $y=tx$, the parameter $t$ being the slope of the line) through the origin intersects the trifolium at one extra point, as we see from the solution of system of equations \ref{system eq for param trifolium} ; see Figure \ref{regular trifolium - 2nd param}.
\begin{figure}[htbp]
\centering
\epsfig{file=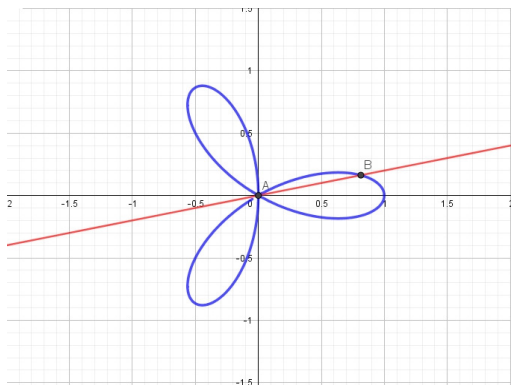,height=5cm}
\caption{Geometric construction for parameterizing the regular trifolium}
\label{regular trifolium - 2nd param}
\end{figure}
The coordinates of this point are the solutions of the following system of equations:
\begin{equation}
\label{system eq for param trifolium}
\begin{cases}
(x^2+y^2)^2=x \; (x^2-3y^2),\\
y=tx.
\end{cases}
\end{equation}
The solution is obtained by substitution and simplification\footnote{Of course, these computations may be performed using a CAS with the \textbf{solve} command. The output may show both the origin and the extra (variable) point.}.
\end{proof}

\begin{remark}
Other rational parametrizations of the unit circle can be obtained, using other families of lines, such as the families given by equations $y=t(x-1)$ (lines through $A(1,0)$, or $y-1=tx$ (lines through $B(0,1)$, etc.). Following the same process as above, we obtain different rational presentations for the regular trifolium. Nevertheless, it can be proven that all of them provide the same implicit equation for the envelope that we will study.
\end{remark}

\section{The 1-parameter family of unit circles centred on the regular trifolium}

We consider now the family of unit circles $\mathcal{C}_t$ centred on the regular trifolium described in Section \ref{section trifolium}. The index $t$ is actually the parameter that we use for the parametric presentation of the trifolium. Here we will use the formulas derived in Prop.~\ref{prop 2nd rational param trifolium}, namely:
\begin{equation*}
\begin{cases}
x(t) = -\frac{3t^2 - 1}{(t^2+ 1)^2},\\
y(t) = -\frac{t(3t^2 - 1)}{(t^2+ 1)^2}.
\end{cases}
\end{equation*}
A general equation for the family of unit circles  centred on the trifolium is of the form $F(x,y,t)=0$, where
\begin{equation*}
  F(x,y,t)=\left( x+ \frac{3t^2 - 1}{(t^2+ 1)^2} \right)^2+\left( y+\frac{t(3t^2 - 1)}{(t^2+ 1)^2}\right)^2-1.
\end{equation*}
According to Definition \ref{def analytic env}, an envelope of this family of unit circles, if it exists, is determined by the solutions of the following system of equations (the second one is displayed after simplification):
\begin{equation}
\label{eq analytic def env}
\begin{cases}
\left( x+ \frac{3t^2 - 1}{(t^2+ 1)^2} \right)^2+\left( y+\frac{t(3t^2 - 1)}{(t^2+ 1)^2}\right)^2-1 =0,\\
-6t^6y + (-12x - 18)t^5 + 18t^4y + (8x + 60)t^3 + 22t^2y + (20x - 18)t - 2y = 0.
\end{cases}
\end{equation}
Denote
\begin{equation*}
D(t)=\left(9t^6 - 45t^4 + 75t^2 + 1\right)\left(t^2 + 1\right)^2.
\end{equation*}
Then solutions of the System (\ref{eq analytic def env}) are given by the two following parametrizations, each one defining an envelope of the family of circles, and their union being also an envelope of this family:
\begin{equation}
\label{param eq envelope 1}
\begin{cases}
x(t)= \frac{27\; \left[ (-3t^4 + 12t^2 - 1)\; \sqrt{D(t)(3t^2 - 5)^2(t^2 + 1)} - 81t^{10} + 567t^8 - 1530t^6 + 1566t^4 - 357t^2 - 5 \right]}{D(t) \; (3t^2-5)}, \\
y(t)=\frac{t \left[ -27t^8 + 144t^6 - 270t^4 + 72t^2 + 2\sqrt{D(t)(3t^2 - 5)^2(t^2 + 1)} + 1 \right]}{D(t)},
\end{cases}
\end{equation}
and
\begin{equation}
\label{param eq envelope 2}
\begin{cases}
x(t)=\frac{27 \; \left[ (3t^4 - 12t^2 + 1)\sqrt{D(t)(3t^2 - 5)^2(t^2 + 1)} - 81t^{10} + 567t^8 - 1530t^6 + 1566t^4 - 357t^2 - 5 \right] }{ D(t) \; (3t^2-5)},\\
y(t)=-\frac{t \; \left[ (27t^8 - 144t^6 + 270t^4 - 72t^2 + 2\sqrt{D(t)(3t^2 - 5)^2(t^2 + 1)} - 1) \right]}{D(t)}.
\end{cases}
\end{equation}
Figures \ref{fig 1st component of envelope} and \ref{fig 2nd component of envelope} show the two separate components, which will be denoted by $\mathcal{E}_1$ and $\mathcal{E}_2$ respectively.  In Figure \ref{fig trifolium and envelope}, both the trifolium and the ``full'' envelope $\mathcal{E}=\mathcal{E}_1 \cup \mathcal{E}_2$ are displayed. A first analysis of the display reveals the following properties:
\begin{itemize}
\item The roles of the components $\mathcal{E}_1$ and $\mathcal{E}_2$  are not symmetric.
\item It seems that $\mathcal{E}_1$ is twice tangent to $\mathcal{C}$ and  $\mathcal{E}_2$ is tangent once.
\item The tree points of tangency are the vertices of an equilateral triangle.
\end{itemize}
Of course, these remarks have to  be proven. This can be done by algebraic manipulations and CAS aided solution of systems of equations.
\begin{figure}[h]
\centering
\mbox{
       \subfigure[1st component]{\epsfig{file=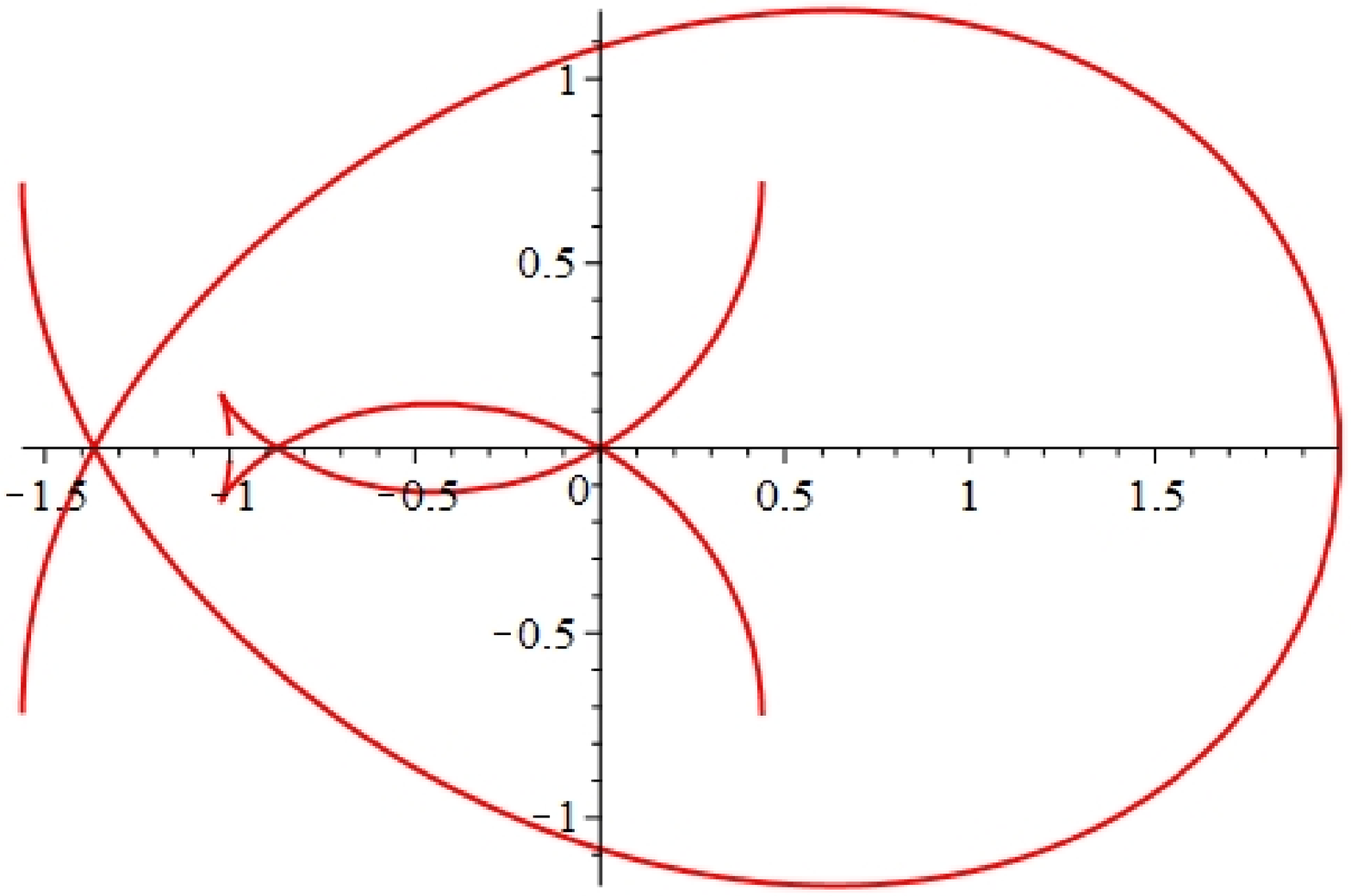,width=3.5cm}
       \label{fig 1st component of envelope}}
       \ 
       \subfigure[2nd component]{\epsfig{file=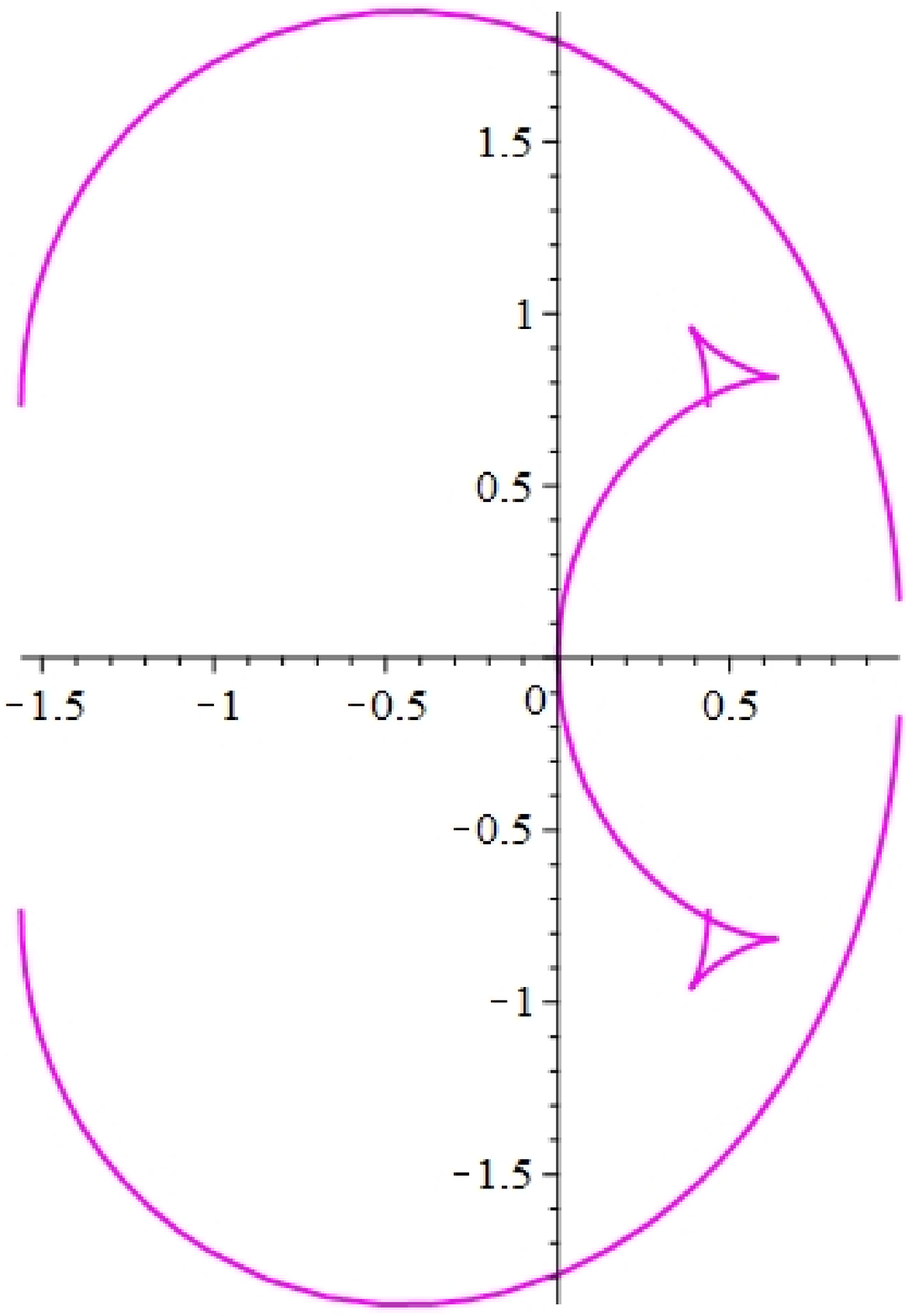,width=3.5cm}
       \label{fig 2nd component of envelope}}
       \ 
       \subfigure[Full envelope]{\epsfig{file=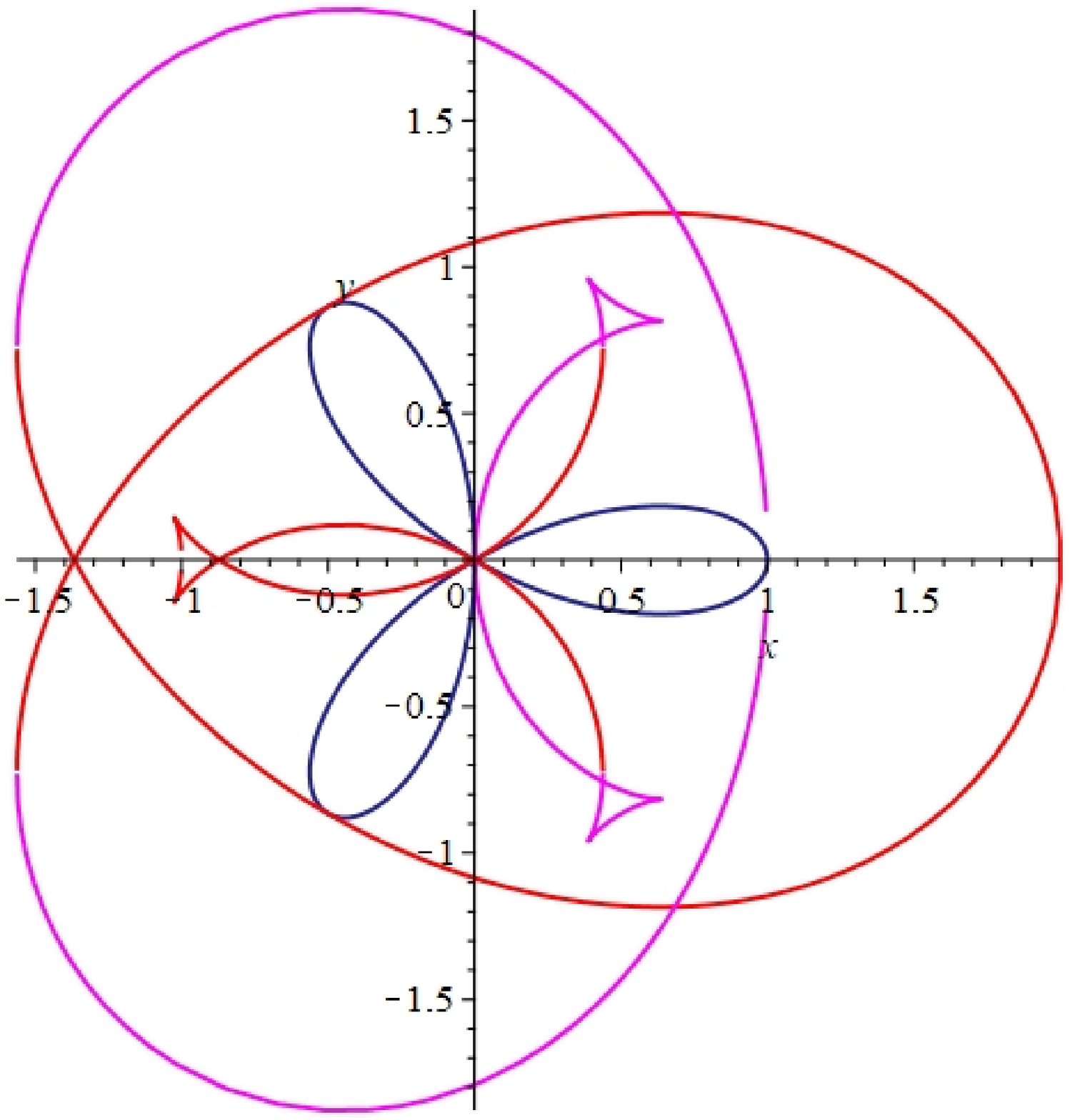,width=4cm}
       \label{fig trifolium and envelope}}
     }
\caption{The envelopes of the family of unit circles}
\label{fig 3 envelopes}
\end{figure}

\begin{remark}
The first and second envelopes are themselves disjoint unions of subcurves, with obvious discontinuities. These plots have been obtained with Maple. A first attempt with the \textbf{plot} command yielded within one second a picture with superfluous straight segments connecting the separate subcurves. Addition of the option \textbf{discont=true} eliminated these superfluous, but each computation required about one minute.
\end{remark}

The full envelope is also the disjoint union of two components: an external one (with three bulbs) and an internal one (looking like 3 fishes connected by their mouth). We will now use algebraic manipulations in order to determine an implicit polynomial equation for the envelope. The properties of the polynomial will indicate whether each of these components is an algebraic curve, or of they are impossible to distinguish by that way.

We transform Equations (\ref{param eq envelope 1}) into polynomial equations, using squaring and transferring form side to side. We obtain the two following polynomials in three variables $x,y,t$:
\begin{dmath*}
P_1 (x,y,t)  = 9t^{14}x^2 - 9t^{14} - 9t^{12}x^2 + 54t^{12}x + 45t^{12} - 51t^{10}x^2 - 180t^{10}x
 + 120t^{10} + 67t^8x^2 + 18t^8x- 678t^8 + 283t^6x^2 + 648t^6x + 647t^6
 + 261t^4x^2 + 250t^4x - 567t^4 + 79t^2x^2  - 148t^2x + 90t^2 + x^2 - 2x,
\end{dmath*}
\begin{dmath*}
P_2 (x,y,t) = 9t^{14}y^2 + 54t^{13}y - 9t^{12}y^2 + 45t^{12} - 180t^{11}y - 51t^{10}y^2 - 447t^{10}
   + 18t^9y + 67t^8y^2 + 1106t^8 + 648t^7y + 283t^6y^2 - 462t^6 + 250t^5y
  + 261t^4y^2 - 111t^4 - 148t^3y + 79t^2y^2 - 99t^2 - 2ty + y^2.
\end{dmath*}

These polynomials generate an ideal $J=\left<P_1,P_2\right>$ in the polynomial ring $\mathbb{R}[x,y,t]$. Using Maple's command \textbf{EliminationIdeal}, we eliminate the variable $t$ (i.e.~we compute a projection onto
$\mathbb{R}[x,y]$). The obtained ideal is generated by a polynomial of degree 28 in two variables $x$ and $y$. This polynomial has exactly two irreducible components, each of degree 14, namely:

\begin{dmath*}
F_1(x,y) = 256x^{14} + 1792x^{12}y^2 + 5376x^{10}y^4 + 8960x^8y^6 + 8960x^6y^8 + 5376x^4y^{10}
 + 1792x^2y^{12} + 256y^{14} - 512x^{13} - 1024x^{11}y^2 + 2560x^9y^4 + 10240x^7y^6
 + 12800x^5y^8 + 7168x^3y^{10} + 1536xy^{12} + 3840x^{12} - 12032x^{10}y^2 - 78848x^8y^4
 - 122368x^6y^6 - 71936x^4y^8 - 8960x^2y^{10} + 3584y^{12} - 5728x^{11} + 14752x^9y^2
 + 50752x^7y^4 + 6464x^5y^6 - 51680x^3y^8 - 27872xy^{10} + 6080x^{10} + 120064x^8y^2
 + 336000x^6y^4 + 333056x^4y^6 + 107968x^2y^8 - 3072y^{10} - 5952x^9 - 22688x^7y^2
 - 435936x^5y^4 - 288480x^3y^6 + 75424xy^8 - 56823x^8 - 376540x^6y^2
 + 232758x^4y^4 - 358620x^2y^6 - 26359y^8 + 35646x^7 + 360802x^5y^2 + 636938x^3y^4
 + 201190xy^6 + 65109x^6 - 176542x^4y^2 - 256867x^2y^4 - 19328y^6 - 38090x^5 -
  387676x^3y^2- 68626xy^4 - 15173x^4+ 339668x^2y^2 + 25677y^4 + 14394x^3 - 67846xy^2
 - 3289x^2 - 90y^2 + 242x,
\end{dmath*}
\begin{dmath*}
F_2(x,y)  =1024x^{14} + 7168x^{12}y^2 + 21504x^{10}y^4 + 35840x^8y^6 + 35840x^6y^8 + 21504x^4y^{10}
 + 7168x^2y^{12} + 1024y^{14} - 2048x^{13} - 4096x^{11}y^2 + 10240x^9y^4 + 40960x^7y^6
 + 51200x^5y^8 + 28672x^3y^{10} + 6144xy^{12} - 3072x^{12} - 27648x^{10}y^2 - 67584x^8y^4
 - 71680x^6y^6 - 39936x^4y^8 - 15360x^2y^{10} - 4096y^{12} + 13632x^{11}+ 13632x^9y^2
  - 81792x^7y^4 - 190848x^5y^6 - 149952x^3y^8 - 40896xy^{10} - 4992x^{10} - 28416x^8y^2
  - 54528x^6y^4- 49152x^4y^6 - 23424x^2y^8 - 5376y^{10} - 34944x^9 + 15552x^7y^2
 + 152640x^5y^4 + 312384x^3y^6 + 99648xy^8 + 28793x^8 + 258596x^6y^2 + 220566x^4y^4
 + 35492x^2y^6 + 44729y^8 + 35078x^7 - 35078x^5y^2 - 175390x^3y^4 - 105234xy^6 - 60633x^6
 - 297828x^4y^2 - 104613x^2y^4- 73514y^6 - 35046x^5 + 70092x^3y^2 + 105138xy^4 + 38880x^4
 + 77760x^2y^2 + 38880y^4+ 23328x^3 - 69984xy^2.
\end{dmath*}

An implicit plot of $F_2$ is identical to the envelope displayed in Figure \ref{fig trifolium and envelope}. Plotting $F_1$ yields a totally different figure, which is irrelevant to our purpose. See \cite{br} for a discussion of irrelevant components in such situations.

The same process, starting from the parametric equations of the second component leads to the same two polynomials $F_1(x,y)$ and $F_2(x,y)$.

The role of the two components can be explored using an animation with Maple. Two snapshots are displayed in Figure \ref{fig animation}.
\begin{figure}[h]
\centering
\mbox{
       \subfigure[]{\epsfig{file=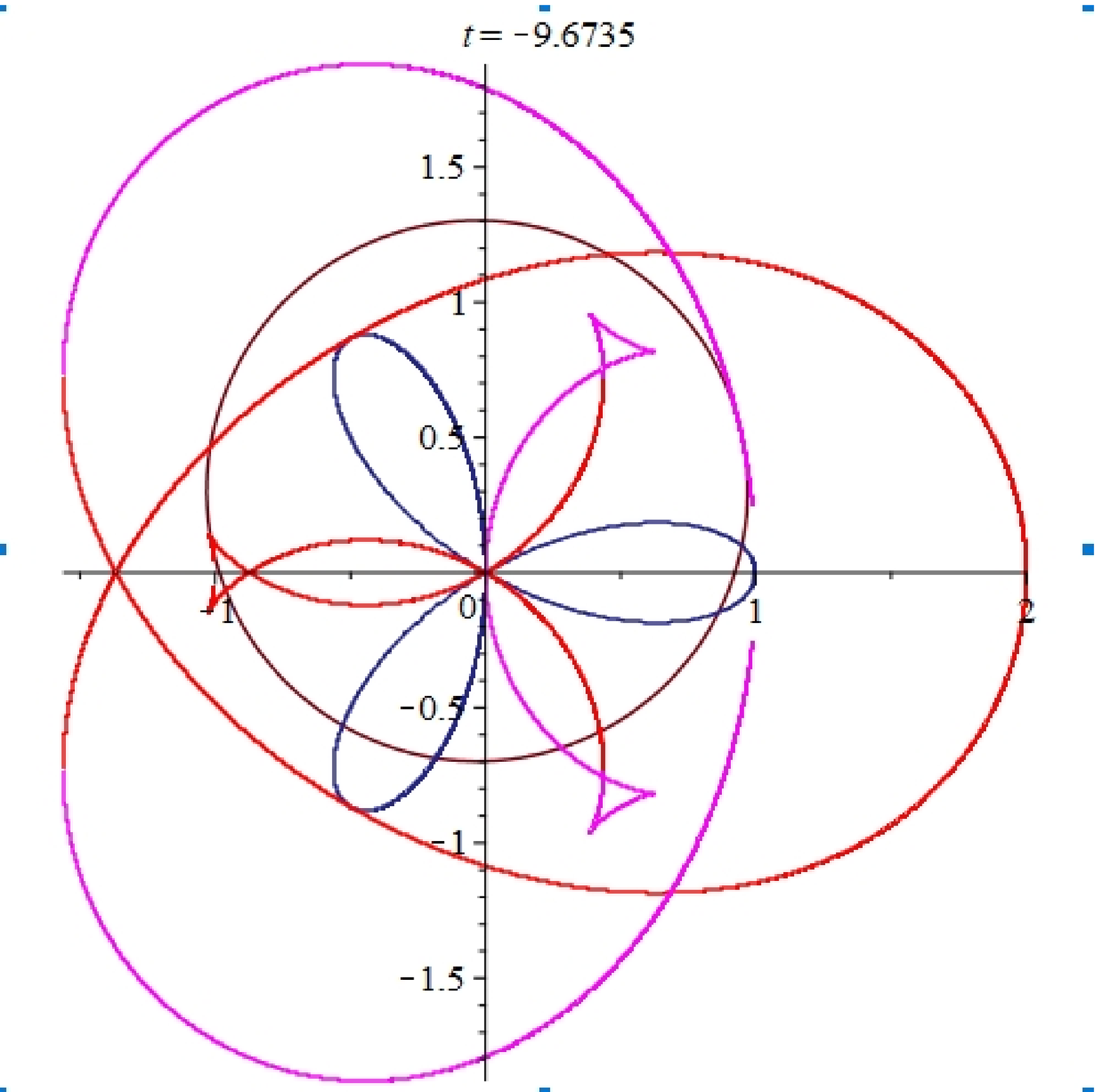,width=4.5cm}}
              \qquad \quad
       \subfigure[]{\epsfig{file=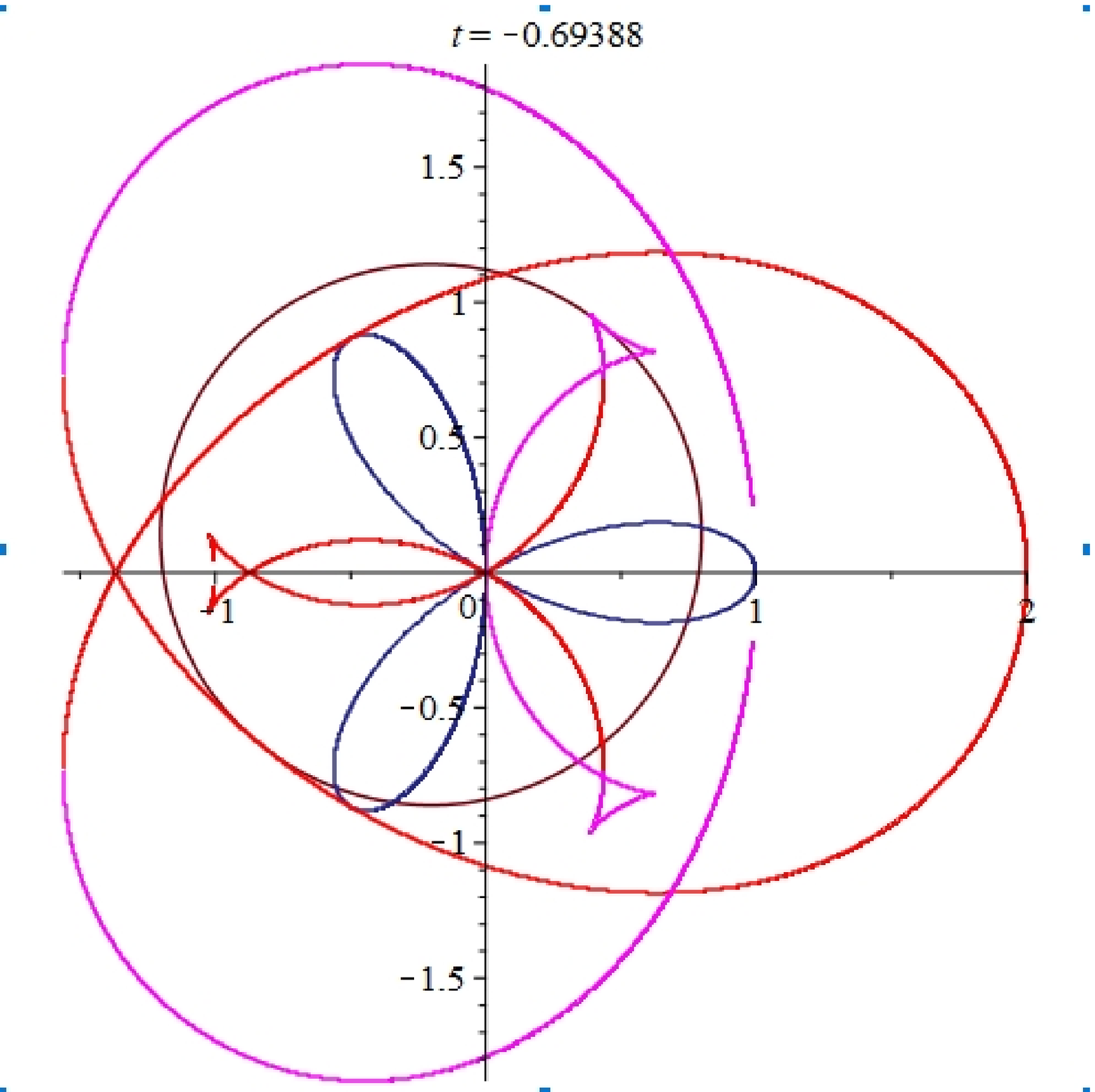,width=4.5cm}}
     }
\caption{Animation of the family of circles when the center moves along the trifolium}
\label{fig animation}
\end{figure}
Note that for every value of the parameter $t$, the corresponding circle touches both components. This confirms, as if this was necessary, an interesting detail in the definition of an envelope: its non-uniqueness. Here, each component is a separate envelope of the given family of circles, and their union too.

\section{The offset at distance 1 of the regular trifolium}
\label{section offset}
An interactive study with GeoGebra of the given family of unit circles centred on the regular trifolium  shows that the offset of the family is different from the envelope. Recall that the circles of the family have the following general equation:
\begin{equation}
\left(x+\frac{3t^2-1}{\left(t^2+1\right)^2}\right)^2+\left(y+\frac{t\left(3t^2-1\right)}{\left(t^2+1\right)^2}\right)^2=1.
\end{equation}
Using a slider bar with the \textbf{Trace On} feature to modify the values of the parameter $t$, a certain number of circles can be plotted. Note that no preview of the envelope is visible here,  but some preview of the offset is obtained. See Figure \ref{fig interactive offset preview}, which shows the regular trifolium $\mathcal{C}$ and a certain number of circles of the family.
\begin{figure}[h]
\centering
\mbox{
       \subfigure[animation of circles]{\epsfig{file=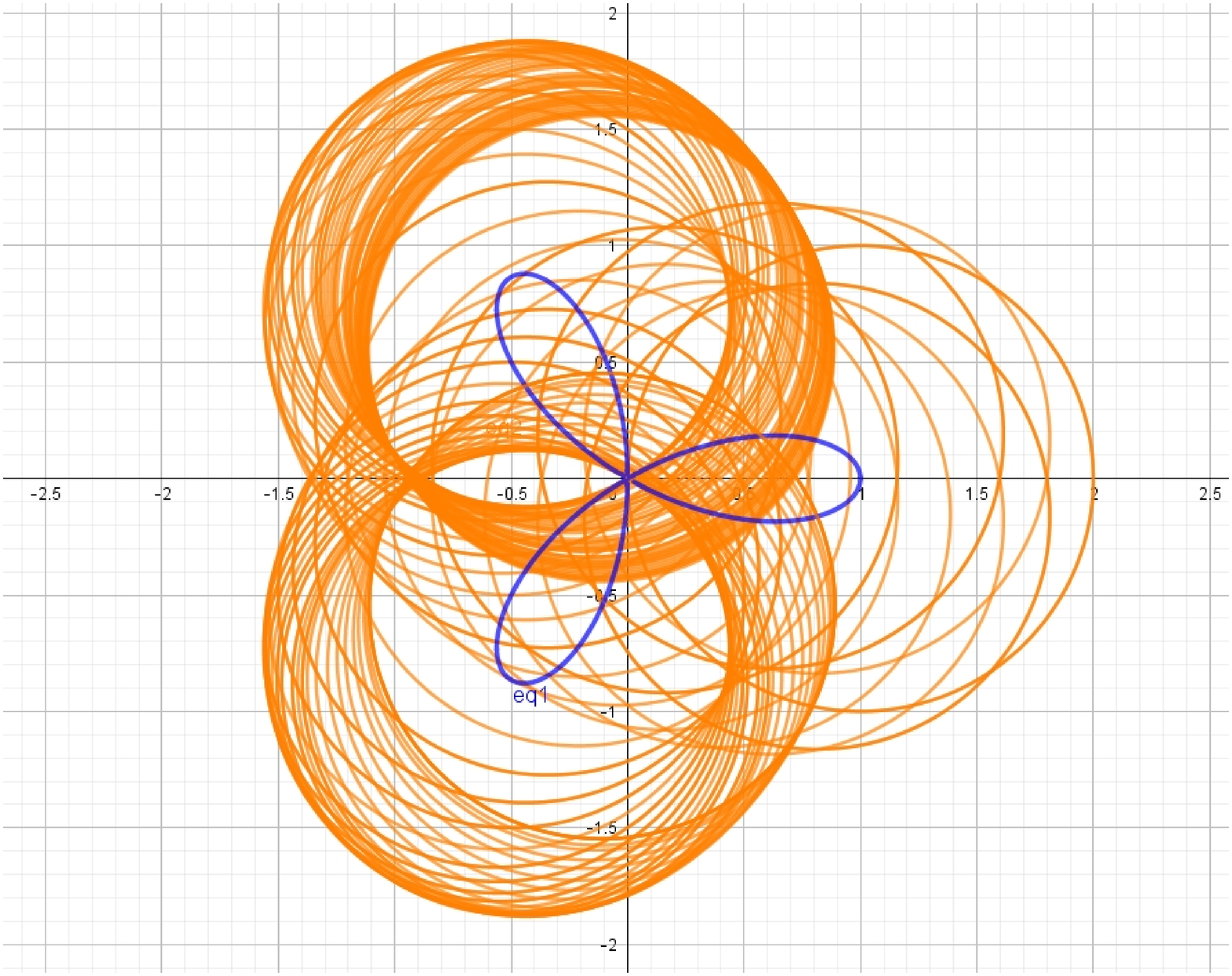,width=5cm}
       \label{fig preview offset}}
       \qquad \quad
       \subfigure[with the envelope]{\epsfig{file=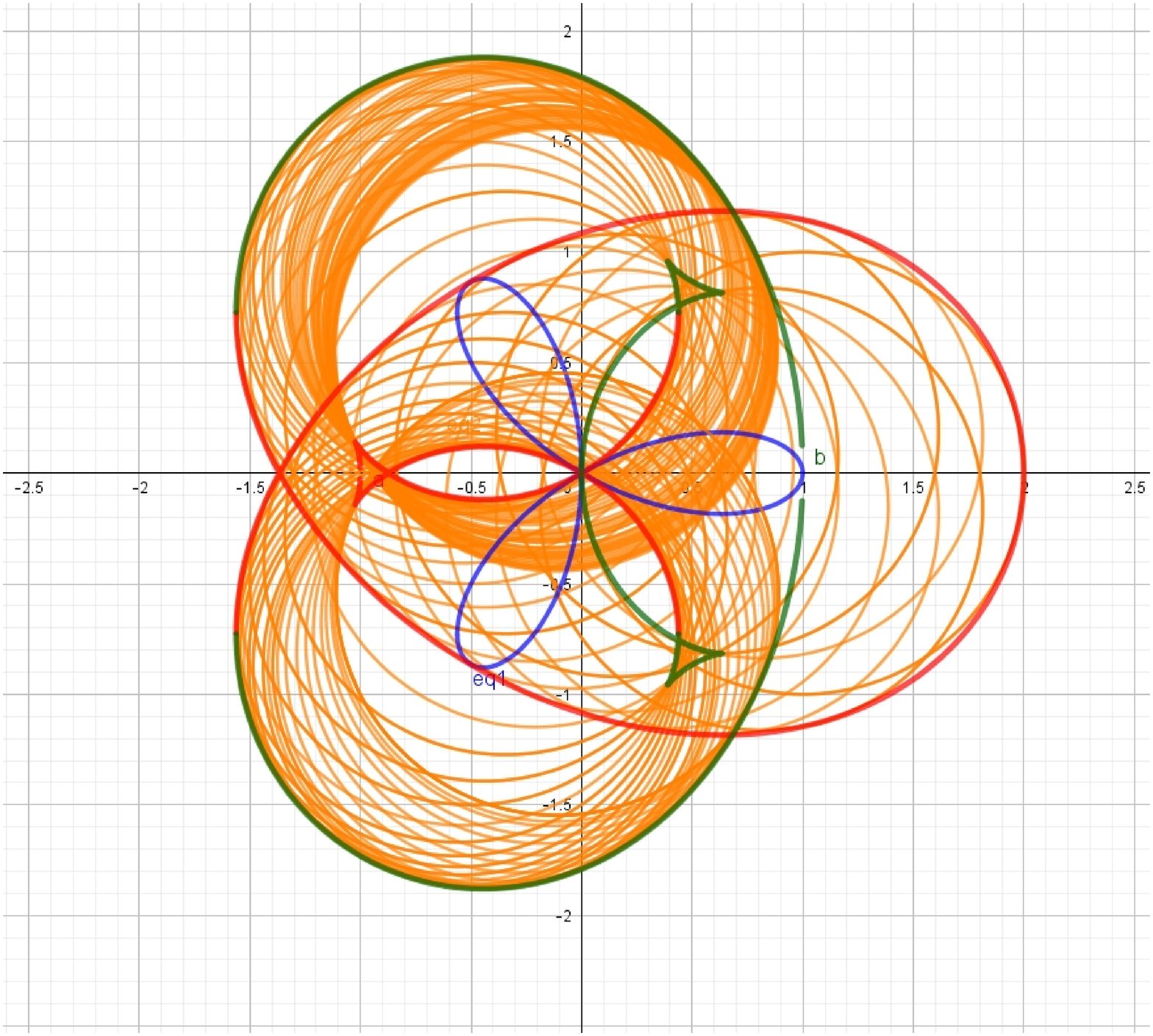,width=5cm}
       \label{fig preview offset with envelope}}
      }
\caption{Offset preview of the family of unit circles}
\label{fig interactive offset preview}
\end{figure}

From this experiment, it appears that the offset is the union of arcs of the external envelope (the internal part, looking like three fishes, is irrelevant to the question here). The open question is: which arcs? The answer can come from an analytic treatment only.

\section{Symbolic experiments with GeoGebra}

Recent improvements in GeoGebra, in particular in its experimental version
\emph{GeoGebra Discovery}, allow us to directly study the offsets
of the regular trifolium  at various distances. Latest versions since December 2020 (freely available
at \url{https://github.com/kovzol/geogebra/releases})
make it possible to define the algebraic equation
(\ref{implicit equation trifolium}) with parameter $a=1$, and then use it as a path for a moving circle.
First the user enters
this equation in the \emph{Input Bar} and obtains the object $eq1$,
then a distance $f$ will be defined by a segment $AB$.
As a next step, a point $C$ is attached to the trifolium curve, and a circle $c$ with center $C$
and radius $f$ will be added. At last, the command \texttt{Envelope($c$,$C$)}
will be used to compute the offset of the trifolium at distance $f$. (The same result
can be obtained by using the \emph{Envelope tool}
\raisebox{-.15\height}{\includegraphics[width=0.4cm]{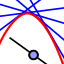}}
 and clicking on $c$ and $C$.
In this alternative way, the user does not even have to type anything but the equation, and just use the mouse.)
Fig.~\ref{trifolium-exp1} shows how the object $eq2$ can be obtained finally when $f=0.5$.

\begin{figure}[htbp]
\centering
\epsfig{file=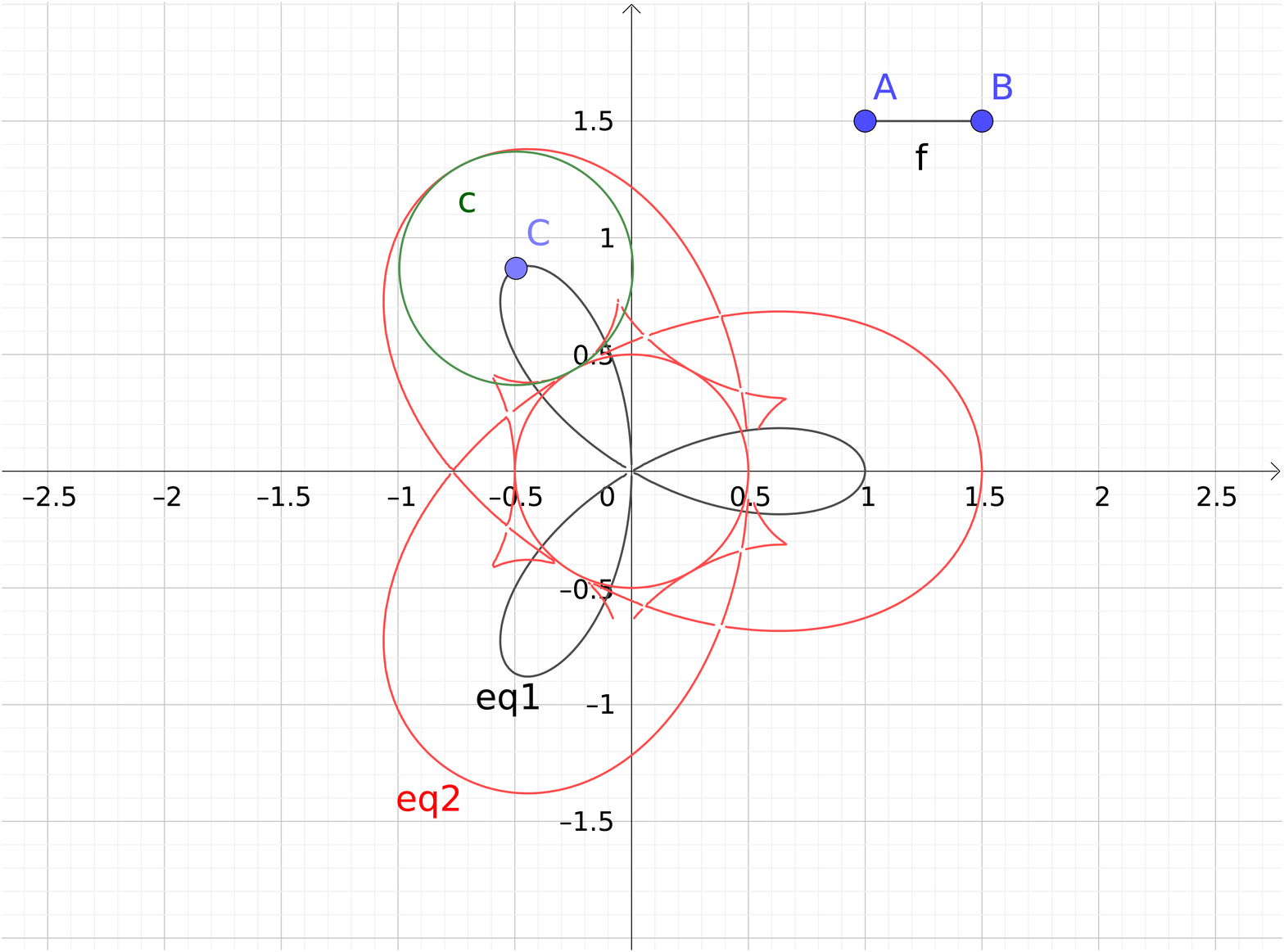,height=5cm}
\caption{The distance $0.5$ offset of the regular trifolium made with GeoGebra Discovery}
\label{trifolium-exp1}
\end{figure}

The obtained figure shows some similarities with the previously obtained figures,
most visibly the external envelope has a similar geometry. On the other hand, there is
an inmost envelope that seems to be a circle. We can verify this fact by opening
the \emph{CAS View} and issue the command
\begin{center}
\texttt{Factors(LeftSide($eq2$)-RightSide($eq2$))}
\end{center}
which informs us that the obtained
result consists of two factors: one defines a circle with center $(0,0)$ and radius $1/2$, reported
as $4 x^{2} + 4  y^{2} - 1$; the other factor
is
\begin{dmath*}
O_{1/2}(x,y)=262144 x^{14} - 524288 x^{13} + 1835008 x^{12} y^{2} - 1048576 x^{11} y^{2} + 540672 x^{11} + 5505024 x^{10} y^{4}
- 2359296 x^{10} y^{2} + 49152 x^{10} + 2621440 x^{9} y^{4} + 540672 x^{9} y^{2} - 614400 x^{9} + 9175040 x^{8} y^{6}
- 5505024 x^{8} y^{4} - 5947392 x^{8} y^{2} - 352000 x^{8} + 10485760 x^{7} y^{6} - 3244032 x^{7} y^{4} + 3981312 x^{7} y^{2}
+ 570368 x^{7} + 9175040 x^{6} y^{8} - 2621440 x^{6} y^{6} - 7766016 x^{6} y^{4} + 9098240 x^{6} y^{2} + 213504 x^{6}
+ 13107200 x^{5} y^{8} - 7569408 x^{5} y^{6} - 10911744 x^{5} y^{4} - 570368 x^{5} y^{2} - 359424 x^{5} + 5505024 x^{4} y^{10}
+ 1572864 x^{4} y^{8} + 1867776 x^{4} y^{6} + 1390080 x^{4} y^{4} - 2059776 x^{4} y^{2} - 277344 x^{4} + 7340032 x^{3} y^{10}
- 5947392 x^{3} y^{8} + 13320192 x^{3} y^{6} - 2851840 x^{3} y^{4} + 718848 x^{3} y^{2} + 62208 x^{3} + 1835008 x^{2} y^{12}
+ 786432 x^{2} y^{10} + 2998272 x^{2} y^{8} - 7244800 x^{2} y^{6} + 2440704 x^{2} y^{4} - 554688 x^{2} y^{2} + 139968 x^{2}
+ 1572864 x y^{12} - 1622016 x y^{10} + 516096 x y^{8} - 1711104 x y^{6} + 1078272 x y^{4} - 186624 x y^{2}
+ 262144 y^{14} - 262144 y^{12} - 638976 y^{10} + 815360 y^{8} - 86528 y^{6} - 277344 y^{4} + 139968 y^{2} - 19683,
\end{dmath*}
defining again a curve of degree 14.

We emphasize here that, in the background, the analytic method was used by GeoGebra, that is, computing a Jacobi
determinant and then using elimination to obtain an equation in two variables, defining a plane curve.
(See \cite{k17} and \cite{k18} for more details.)
In other words: GeoGebra performed an \textit{automated proof} on the validity of the formula that describes
$O_{1/2}(x,y)$. What is more, GeoGebra's symbolic capability ensured that the conjectured formula was
also obtained in an automated way. (In fact, these two steps: conjecture and proof were
performed in one operation.)
The computation took less than half of a second on a modern computer.

When choosing $f=1$ or other values, similar results can be obtained. For example, when
changing $f$ to $1$ the result $(x^2+y^2-1)\cdot F_2(x,y)$ will be delivered, that is,
again a product of a circle and the offset. In this case is the computation, however, somewhat slower.

As a consequence, dynamic investigation of the set of offsets for various distances
is a bit inconvenient in GeoGebra.
When fixing point $A$ and dragging point $B$ we reach computation speed
between and 1.34 and 1.35 frame per second (FPS)\footnote{Testing was performed
on Ubuntu Linux 18.04, Intel(R) Core(TM) i7-4770 CPU @ 3.40GHz. See
\url{https://prover-test.geogebra.org/job/GeoGebra_Discovery-art-plottertest/72/artifact/fork/geogebra/test/scripts/benchmark/art-plotter/html/all.html}
for a detailed output of the benchmarking suite, test cases \texttt{trifolium-offset1.ggb}, \ldots,
\texttt{trifolium-offset4.ggb}. These results are valid for
the native version of GeoGebra, that is, GeoGebra Classic 5. The web version, that is, GeoGebra Classic 6,
underperforms this speed between 0.23 and 0.24 FPS.}, even if the same distance is defined
but with different input points---this difference comes from the very different computational
character of the slightly different algebraic translations.
In fact, for an enjoyable animation the user could expect at least 5 FPS,
so further speedup of the computation could be addressed in future work.

On the other hand, slow movement of point $B$ can show how the set of offsets look like.
Fig.~\ref{trifolium-exp3} presents how the curves change while $B$ is dragged from $(1.5,1.5)$ to $(1,0.5)$.
(Coloring of point $B$ and the curve $eq2$ depends on the length of $f$: we used the RGB-components $(f,1-f,1)$.)
An online version of this experiment can be found at \cite{offsets-of-a-trifolium}.

\begin{figure}[htbp]
\centering
\epsfig{file=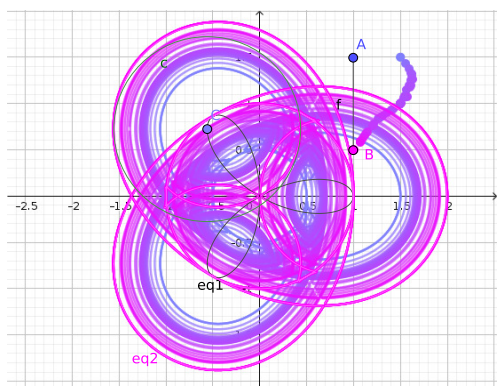,height=5cm}
\caption{Various offsets of the trifolium for distances $f$ between $0.5$ and $1$}
\label{trifolium-exp3}
\end{figure}

\section{Conclusion}
We derived the offset curve of a regular trifolium by using different approaches.
Automated reasoning based on \textit{elimination} played an important role in our results.
In our last experiment we highlighted that a point-and-click approach by using
a Dynamic Geometry System can merge conjecturing and proving to a single step,
and dragging some input points a large set of theorems can be proven in a novel way.

\section{Acknowledgments}
First author was partially supported by the CEMJ Chair at JCT 2019-2020.
Second author was partially supported by a grant MTM2017-88796-P from the
Spanish MINECO (Ministerio de Economia y Competitividad) and the ERDF
(European Regional Development Fund).
Special thanks to Bernard Parisse
for his efforts on improving the Giac Computer Algebra System
especially for this research.

\bibliographystyle{elsarticle-harv}

\begin{thebibliography}{30}
\bibitem  {AL} Adams, W., Loustaunau, P.: An Introduction to Gr\" obner Bases, Graduate Studies in Mathematics \textbf{3}, American Mathematical Society, RI: Providence (1994).

\bibitem {berger} Berger, M.: Geometry, Berlin-Heidelberg: Springer Verlag (1987).

\bibitem  {br} Botana, F., Recio, T.:  Some issues on the automatic computation of plane envelopes in interactive environments, Mathematics and Computers in Simulation \textbf{125}, 115--125 (2016).
\url{https://doi.org/10.1016/j.matcom.2014.05.011}

\bibitem {bv} Botana, F., Valcarce, J.L.:  Automatic determination of envelopes and other derived curves within a graphic environment, Mathematics and Computers in Simulation \textbf{67}, 3--13
(2004).
\url{https://doi.org/10.1016/j.matcom.2004.05.004}

\bibitem  {cox} Cox, D., Little, J., O'Shea, D.:  Ideals, Varieties, and Algorithms: An Introduction to Computational Algebraic Geometry and Commutative Algebra, Undergraduate Texts in Mathematics, Springer Verlag, New York (1992).

\bibitem {dp-trifolium} Dana-Picard, Th.:  Automated study of a regular trifolium, Math.~Comput.~Sci. \textbf{13}(1-2), 57--67 (2018).
\url{https://doi.org/10.1007/s11786-018-0351-7}

\bibitem {dp-circles on astroid} Dana-Picard, Th.:
Envelopes and Offsets of Two Algebraic Plane Curves: Exploration of Their Similarities and Differences.
Math.~Comput.~Sci. (2021).
\url{https://doi.org/10.1007/s11786-021-00504-5}

\bibitem {dialog} Dana-Picard, Th., Kov\'acs, Z.:  Networking of technologies: a dialog between CAS and DGS.
Electronic Journal of Mathematics \& Technology. \textbf{15}(1), 43--59 (2021).

\bibitem {offsets-of-a-trifolium} Dana-Picard, Th., Kov\'acs, Z.: Offsets of a trifolium, available: \url{https://matek.hu/zoltan/offsets-of-a-trifolium.php} (2020).

\bibitem {DPZ-envelopes} Dana-Picard, Th., Zehavi, N.: Revival of a classical topic in Differential Geometry: the exploration of envelopes in a computerized environment, International Journal of Mathematical Education in Science and Technology \textbf{47}(6), 938--959 (2016).
\url{https://doi.org/10.1080/0020739X.2015.1133852}

\bibitem {mathcurve trifolium} Ferr\'eol, R., J.:  Regular Trifolium, available: \url{https://mathcurve.com/courbes2d.gb/trifoliumregulier/trifoliumregulier.shtml} (2017).

\bibitem {mathcurve rational quartic} Ferr\'eol, R.:  Rational Quartic, available: \url{https://mathcurve.com/courbes2d.gb/quarticrationnelle/quarticrationnelle.shtml} (2017).

\bibitem {kock}  Kock, A.:  Envelopes -- notion and definiteness, Beitr\"age zur Algebra und Geometrie (Contributions to Algebra and Geometry) \textbf{48}, 345--350 (2007).

\bibitem {k17} Kov\'acs, Z.:  Real-time Animated Dynamic Geometry in the Classrooms by Using Fast Gröbner Basis Computations, Math.~Comput.~Sci. \textbf{11}(3-4), 351--361 (2017).
\url{https://doi.org/10.1007/s11786-017-0308-2}

\bibitem {k18} Kov\'acs, Z.:  Achievements and Challenges in Automatic Locus and Envelope Animations in Dynamic Geometry,
Math.~Comput.~Sci. \textbf{13}, 131--141  (2018).
\url{https://doi.org/10.1007/s11786-018-0390-0}

\bibitem {sendra} Sendra, J.R., Winkler, F., Pérez-Díaz, S.:  Rational Algebraic Curves: A Computer Algebra Approach, Springer (2008).

\bibitem {ZDP2010} Zeitoun, D., Dana-Picard, Th.:  Accurate visualization of graphs of functions of two real variables, International Journal of Computational and Mathematical Sciences \textbf{4}(1), 1--11 (2010).

\bibitem {ZDP2019} Zeitoun, D., Dana-Picard, Th.:  On the usage of different coordinate systems for 3D plots of functions of two real variables,
Math.~Comput.~Sci. \textbf{13}, 311--327 (2019).
\url{https://doi.org/10.1007/s11786-018-0359-z}


\end{thebibliography}

\end{document}